\newtheorem{theorem}{Theorem}[section]
\newtheorem{corollary}[theorem]{Corollary}
\newtheorem{lemma}[theorem]{Lemma}
\newtheorem{proposition}[theorem]{Proposition}
\newtheorem{definition}[theorem]{Definition}
\newtheorem{example}[theorem]{Example}
\newtheorem{remark}[theorem]{Remark}
\newcommand{\Ker}{{\rm ker}}
\begin{document}
\sloppy

\title[Stable short exact sequences]{When stable short exact sequences define an exact structure on an additive category}

\author{Septimiu Crivei}

\address{Faculty of Mathematics and Computer Science \\ ``Babe\c s-Bolyai" University \\ Str. Mihail Kog\u alniceanu 1
\\ 400084 Cluj-Napoca, Romania} \email{crivei@math.ubbcluj.ro}

\subjclass[2000]{18E05, 18E10, 18G50} \keywords{Additive category, exact category, (weakly) idempotent complete
category, stable short exact sequence.}

\begin{abstract} Rump has recently showed the existence of a unique maximal Quillen exact structure on any additive
category. We study when this is given by the stable short exact sequences, i.e. kernel-cokernel pairs
consisting of a semi-stable kernel and a semi-stable cokernel. 
\end{abstract}

\date{September 21, 2013}

\thanks{The author would also like to thank Mike Prest for illuminating discussions leading to Example \ref{e:SR}.}

\maketitle            

\section{Introduction}

Exact categories provide a suitable setting for developing homological algebra beyond abelian categories, and have
important applications in different fields, such as algebraic $K$-theory, algebraic geometry and topology, algebraic and
functional analysis etc., where many relevant categories have a poorer algebraic structure than an abelian category. The
most important concept of exact category in the additive case has crystalized in the work of Quillen on algebraic
$K$-theory \cite{Q}, and has been simplified by Keller \cite{Keller}. This extends different previous notions such
as those of Heller \cite{Heller} and Yoneda \cite{Y}, to mention only the closest ones. A Quillen exact category is an
additive category endowed with a distinguished class of kernel-cokernel pairs, called \emph{conflations}, satisfying
certain axioms (see Definition \ref{d:exact}). Several recent papers present exhaustive accounts on Quillen exact
categories \cite{Buhler,FS}, and rebring into the main attention their power, visible in some new applications, such as
those in functional analysis \cite{FS,SW}, model structures \cite{G} or approximation theory \cite {MS}.  

Every additive category has a smallest exact structure, whose conflations are the split exact sequences. It is natural
to wonder if there exists, and then which is, the greatest exact structure on an arbitrary additive category. More
important than its maximality it is the possibility to have a natural exact structure on any additive category, apart
from the trivial one given by the split exact sequences, with respect to which to develop homological algebra. Since
every conflation is a kernel-cokernel pair, a first candidate for the class of conflations should obviously be the
maximal possible class in the definition of an exact category, namely that of all kernel-cokernel pairs. While this is
the right choice for abelian categories, or even for quasi-abelian categories (i.e. additive categories with kernels and
cokernels such that pushouts preserve kernels and pullbacks preserve cokernels) \cite{Gruson,Rump01,Sch}, it is no
longer suitable for the more general setting of preabelian categories (i.e. additive categories with kernels and
cokernels). 

It has already been observed by Richman and Walker \cite[p.~522]{RW} that the class of all kernels
(cokernels) in a preabelian category is not closed under pushouts (pullbacks), and so the class of all kernel-cokernel
pairs cannot define an exact structure in this setting. In order to study homological algebra in preabelian
categories they introduced \emph{semi-stable} kernels (cokernels) as those kernels (cokernels) which are
preserved by pushouts (pullbacks), and the \emph{stable} short exact sequences as those kernel-cokernel pairs
consisiting of a semi-stable kernel and a semi-stable cokernel. Recently, Sieg and Wegner have made use of these
concepts, and showed that the class of stable short exact sequences defines the unique maximal exact structure on any
preabelian category \cite[Theorem~3.3]{SW}. A natural extension of the definition of semi-stable kernels and semi-stable
cokernels from a preabelian category to an arbitrary additive category allowed the generalization of the above result in
\cite[Theorem~3.5]{C12}, which shows that the class of stable short exact sequences defines the unique maximal exact
structure on any weakly idempotent complete additive category. Note that many classes of additive categories, such as
the accessible categories (which have a natural exact structure consisting of the pure exact sequences) \cite{Prest} and
the triangulated categories (which only have the trivial exact structure), are weakly idempotent complete. But there
are additive categories (even exact) which are not weakly idempotent complete, for instance any category of free
modules in which there exist projective modules which are not free (e.g., see \cite[p.~2894]{G}).

The remaining challenge was to determine a greatest exact structure on any additive category. A step towards that
direction has recently been made by Rump, which shows that there exists a greatest exact structure on any additive
category \cite{Rump11}. His interesting approach uses a new concept of
one-sided exact category (also, see \cite{BC}), by constructing the maximal left exact structure and the maximal
right exact structure, and then deducing the existence of the greatest exact structure. Nevertheless, the question
whether this is defined by the class of stable kernel-cokernel pairs has still remained open. The main obstacle is 
to prove that the semi-stable kernels and the semi-stable cokernels satisfy Quillen's ``obscure axiom'' (see
\cite{C12,Rump11}). 

In the present paper we establish an equivalent condition for the stable short exact sequences to define an exact structure on 
an additive category, and we shall show that this is not always checked. 
Our approach uses essentially the property that every additive category has an additive idempotent (or Karoubian) completion 
\cite[p.~75]{K}, which in turn is weakly idempotent complete \cite{Buhler}. 
The technique is to employ the canonical fully faithful functor 
$H:\mathcal{C}\to \widehat{\mathcal{C}}$ between an additive category $\mathcal{C}$ and its idempotent completion 
$\widehat{\mathcal{C}}$ in order to transfer properties. One of the main ingredients is to prove that 
$H$ preserves stable short exact sequences. Then we show that the stable short exact sequences define an exact structure 
on an additive category $\mathcal{C}$ if and only if $\mathcal{C}$ is stable under pushouts and pullbacks for 
$(\widehat{\mathcal{C}},H)$ in the sense of the forthcoming Definition \ref{d:stablePOPB}. The latter condition is 
usually easier to check, and we illustrate this in two relevant situations. We also give 
some applications to categories of chain complexes and projective spectra. 

Our characterization is inspired by that of an {\it exact full subcategory} $\mathcal{C}'$ of an additive category $\mathcal{C}$ 
(in the sense that the restriction of an exact structure $\mathcal{E}$ from $\mathcal{C}$ to the full subcategory $\mathcal{C}'$ is still 
an exact structure) \cite[Theorem~2.6]{DS}. As already noted by Dierolf and Sieg \cite{DS}, any extension-closed subcategory of 
an exact category is an exact subcategory in the above sense. But there are exact subcategories 
$\mathcal{C}'$ of some exact category $\mathcal{C}$ which are not extension-closed in $\mathcal{C}$ (e.g., see \cite[Remark~2.3]{DS}, 
which argues that the full subcategory $LCS$ of locally convex spaces of the category $TVS$ of topological vector spaces with 
the exact structure given by the short topologically exact sequences is an exact subcategory which is not extension-closed in $TVS$), 
even if the Gabriel-Quillen embedding theorem allows one to realize them as extension-closed subcategories of some abelian categories.

\section{Idempotent completion}

An additive category is called \emph{idempotent complete} if every idempotent morphism has a kernel
\cite[D\'efinition~1.2.1]{K}. A remarkable result states that every additive category has an \emph{idempotent
completion} (also called \emph{Karoubian completion}) \cite[Lemme~1.2.2]{K}. More precisely, for every additive category
$\mathcal{C}$, there exists an idempotent complete additive category $\widehat{\mathcal{C}}$ and a fully faithful
additive functor $H:\mathcal{C}\to \widehat{\mathcal{C}}$. 

The category $\widehat{\mathcal{C}}$ has as objects the pairs
$(A,p)$, where $A$ is an object of $\mathcal{C}$ and $p:A\to A$ is an idempotent morphism in $\mathcal{C}$, and as 
morphisms between two objects $(A,p)$ and $(B,q)$ of $\widehat{\mathcal{C}}$ the morphisms $f:A\to B$ in
$\mathcal{C}$ such that $f=qfp$. The biproduct in $\widehat{\mathcal{C}}$ is given by $(A,p)\oplus (B,q)=(A\oplus
B,p\oplus q)$. The functor $H:\mathcal{C}\to \widehat{\mathcal{C}}$ is defined by $H(A)=(A,1_A)$ on
objects $A$ of $\mathcal{C}$, and by $H(f)=f$ on morphisms $f$ in $\mathcal{C}$ (also see \cite[Section~6]{Buhler}).
We denote by ${\rm Im}(H)$ the essential image of $H$.

The next remark will be important in what follows.

\begin{remark} \rm (i) Every idempotent complete additive category is weakly idempotent complete, in the sense that
every section has a cokernel, or equivalently, every retraction has a kernel \cite[Lemma~7.1]{Buhler}. 

(ii) Every object of the idempotent completion $(\widehat{\mathcal{C}},H)$ of an additive category $\mathcal{C}$ is a
direct summand of an object in ${\rm Im}(H)$. Indeed, for every object $(A,p)$ of $\widehat{\mathcal{C}}$, the
morphism $p:(A,p)\to (A,1_A)$ splits, whence $(A,p)$ is a direct summand of $H(A)=(A,1_A)$, because
$\widehat{\mathcal{C}}$ is weakly idempotent complete (e.g., see \cite[Remark~7.4]{Buhler}). The identity morphism ${\rm
id}_A$ of $(A,p)$ is $p$.
\end{remark}

\begin{lemma} \label{l:presrefl} Let $\mathcal{C}$ be an additive category and let $(\widehat{\mathcal{C}},H)$ be its
idempotent completion. Then $H$ preserves and reflects pullbacks and pushouts.
\end{lemma}

\begin{proof} Since $H$ is fully faithful, it reflects pullbacks and pushouts \cite[Chapter~II,~Theorem~7.1]{M}. We show
that $H$ preserves pullbacks, the preservation of pushouts being dual. 
Let $(B'=B\times_CC',g,d')$ be a pullback of some morphisms $d:B\to C$ and $h:C'\to C$ in $\mathcal{C}$. Let $(D,p)$ be
an object of $\widehat{\mathcal{C}}$ for some object $D$ and idempotent morphism $p:D\to D$ in $\mathcal{C}$, and let
$\alpha:(D,p)\to H(B)$, $\beta:(D,p)\to H(C')$ be morphisms in $\widehat{\mathcal{C}}$ such that
$H(d)\alpha=H(h)\beta$. Then $\alpha:D\to B$, $\beta:D\to C'$, $\alpha=\alpha p$ and $\beta=\beta p$.
By the pullback property there is a unique morphism $w:D\to B'$ in $\mathcal{C}$ such that $gw=\alpha$ and
$d'w=\beta$. Then the morphism $wp:(D,p)\to H(B')$ in $\widehat{\mathcal{C}}$ satisfies the equalities
$H(g)wp=\alpha$ and $H(d')wp=\beta$. If $w':(D,p)\to H(B')$ is another morphism in $\widehat{\mathcal{C}}$ such that
$H(g)w'=\alpha$ and $H(d')w'=\beta$, then $w':D\to B'$ and $w'=w'p$ in $\mathcal{C}$. By the pullback property
of $d$ and $h$, it follows that $w'=w$, and so $w'=w'p=wp$. Hence $(H(B'),H(g),H(d'))$ is a pullback of $H(d)$ and
$H(h)$ in $\widehat{\mathcal{C}}$.
\end{proof}

The notion of stable short exact sequence was introduced in \cite{RW} for preabelian categories, and generalized to
arbitrary categories in \cite{C12} as follows. 

\begin{definition} \label{d:ss} \rm Let $\mathcal{C}$ be a category. A cokernel $d:B\to C$ in $\mathcal{C}$
is called a \emph{semi-stable cokernel} if there exists a pullback $(B'=B\times_CC',g,d')$ of $d$ along any morphism
$h:C'\to C$: 
\[\SelectTips{cm}{}
\xymatrix{
B' \ar[d]_g \ar[r]^{d'} & C' \ar[d]^h \\ 
B \ar[r]^{d} & C  
}\] 
such that $d':B'\to C'$ is a cokernel. The notion of \emph{semi-stable kernel} is defined dually. A short exact
sequence, i.e. a kernel-cokernel pair, $A\overset{i}\to B\overset{d}\to C$ is called \emph{stable} if $i$
is a semi-stable kernel and $d$ is a semi-stable cokernel. 
\end{definition}

We need two well-known result on pullbacks, whose duals for pushouts hold as well.

\begin{lemma}{\cite[Lemma~5.1]{Kelly}} \label{l:PB} Consider the following diagram in a category $\mathcal{C}$ such that
the squares are commutative and the right square is a pullback: 
\[\SelectTips{cm}{}
\xymatrix{
A' \ar[d]_f \ar[r]^{i'} & B' \ar[d]_g  \ar[r]^{d'} & C' \ar[d]^h \\ 
A \ar[r]^i & B\ar[r]^d & C 
}\]
Then the left square is a pullback if and only if so is the rectangle.
\end{lemma}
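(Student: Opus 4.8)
The statement is the classical pasting law for pullbacks, so the plan is simply a diagram chase with the universal property; I do not expect any genuine obstacle, only careful bookkeeping. Throughout I would regard the outer rectangle as the square whose top edge is $d'i':A'\to C'$, bottom edge $di:A\to C$, left edge $f$ and right edge $h$; it commutes since $hd'i'=dgi'=dif$ by commutativity of the two given squares, and so it makes sense to ask whether it is a pullback.

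For the ``only if'' direction, assume the left square is a pullback. Given a test object $X$ with morphisms $u:X\to A$ and $v:X\to C'$ such that $hv=diu$, I would first feed the pair $(iu,v)$ into the right-hand pullback: since $d(iu)=hv$, there is a unique $t:X\to B'$ with $gt=iu$ and $d't=v$. Then $(u,t)$ satisfies $iu=gt$, so the left pullback yields a unique $w:X\to A'$ with $fw=u$ and $i'w=t$; composing gives $d'i'w=d't=v$, so $w$ mediates for the rectangle. For uniqueness, any $w'$ with $fw'=u$ and $d'i'w'=v$ has $i'w'$ mediating for the right pullback over the pair $(iu,v)$, whence $i'w'=t$, and then uniqueness in the left pullback forces $w'=w$.

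For the ``if'' direction, assume the rectangle is a pullback. Given $X$ with $u:X\to A$ and $t:X\to B'$ satisfying $iu=gt$, set $v:=d't$; then $hv=hd't=dgt=diu$, so the rectangle provides a unique $w:X\to A'$ with $fw=u$ and $d'i'w=v$. It remains to check $i'w=t$: both $i'w$ and $t$ satisfy $g(-)=iu$ (for $i'w$ use $gi'w=ifw=iu$) and $d'(-)=v$, so they coincide by uniqueness in the right pullback. Uniqueness of $w$ is then immediate, since $fw'=u$ and $i'w'=t$ give $d'i'w'=v$, and the rectangle pullback forces $w'=w$. The dual statement for pushouts is obtained by reversing all arrows. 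If anything requires attention it is only keeping the two universal properties from being conflated during the chase, which the notation above is designed to prevent.
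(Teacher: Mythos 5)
Your diagram chase is correct and complete in both directions; this is the standard proof of the pullback pasting law. The paper itself offers no proof, simply citing Kelly's Lemma 5.1, and your argument is exactly the classical one that reference contains, so there is nothing to add.
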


\begin{lemma}{\cite[Theorem~5]{RW}} \label{l:RW} Let $d:B\to C$ and $h:C'\to C$ be morphisms in a category $\mathcal{C}$
such that $d$ has a kernel $i:A\to B$, and the pullback of $d$ and $h$ exists. Then there exists a commutative diagram
in $\mathcal{C}$: 
\[\SelectTips{cm}{}
\xymatrix{
A \ar@{=}[d] \ar[r]^{i'} & B' \ar[d]_g \ar[r]^{d'} & C' \ar[d]^h \\ 
A \ar[r]^i & B \ar[r]^d & C  
}\] 
in which the right square is a pullback and $i':A\to B'$ is the kernel of $d'$.
\end{lemma}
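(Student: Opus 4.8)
The plan is a short categorical argument: first extract $i'$ from the universal property of the given pullback, then check that it has the universal property of a kernel of $d'$. Write $(B',g,d')$ for the pullback of $d$ along $h$, so that $dg=hd'$. Since $i=\Ker d$ we have $di=0$, hence $di=0=h\cdot 0$, so the pair $(i:A\to B,\,0:A\to C')$ is a compatible cone on the cospan $B\to C\leftarrow C'$ formed by $d$ and $h$; the universal property of the pullback then provides a unique morphism $i':A\to B'$ with $gi'=i$ and $d'i'=0$. Taking the left-hand square of the diagram to have vertical maps $\mathrm{id}_A$ and $g$ (it commutes because $gi'=i$) and the right-hand square to be this pullback, one already obtains the diagram in the statement, so it remains only to prove that $i'$ is a kernel of $d'$.

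I would do this in two steps. First, $i'$ is a monomorphism: since $gi'=i$ and $i$, being a kernel, is monic, so is $i'$. Second, let $t:T\to B'$ satisfy $d't=0$; composing with $g$ gives $d(gt)=h(d't)=0$, so by the universal property of $i=\Ker d$ there is a morphism $s:T\to A$ with $is=gt$. Then $i's=t$, because both $i's$ and $t$ are morphisms $T\to B'$ whose composite with $g$ equals $gt$ (indeed $gi's=is=gt$) and whose composite with $d'$ is $0$ (indeed $d'i's=0=d't$), so they coincide by the \emph{uniqueness} clause of the pullback's universal property. The monicity of $i'$ makes the factorization $s$ unique, whence $i'=\Ker d'$.

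There is no real obstacle here: the statement is purely categorical and the class $\mathcal{H}$ is irrelevant to it. The single point deserving attention is the last equality $i's=t$, where one must use the uniqueness part of the universal property of the pullback, not merely the existence of a mediating morphism. (Lemma~\ref{l:PB} is not needed for this argument.)
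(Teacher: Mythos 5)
Your proof is correct and is the standard argument; the paper itself gives no proof of this lemma, merely citing \cite[Theorem~5]{RW}, and your argument is exactly the one used there. You correctly identify the one delicate point, namely that $i's=t$ follows from the \emph{uniqueness} clause of the pullback's universal property applied to the cone $(gt,0)$, and the rest (monicity of $i'$ from $gi'=i$, the factorization of $gt$ through $i=\Ker d$) is routine.
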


Let us note some useful remarks, which will be freely used together with their dual versions.

\begin{remark} \label{r:rem} \rm (i) Every semi-stable cokernel is the cokernel of its kernel by
\cite[Remark~2.5]{C12}. 

(ii) The pullback of a semi-stable cokernel along any morphism exists and is again a semi-stable cokernel by Lemma
\ref{l:PB}.

(iii) Every projection onto a direct summand is a semi-stable cokernel by \cite[Corollary~3.3]{C12}. In particular,
every isomorphism is a semi-stable cokernel.
\end{remark}

The following result is one of the keys for establishing our main theorem.

\begin{proposition} \label{p:main} Let $\mathcal{C}$ be an additive category and let $(\widehat{\mathcal{C}},H)$ be its
idempotent completion. Then $H$ preserves stable short exact sequences.
\end{proposition}

\begin{proof} Let $A\stackrel{i}\to B\stackrel{d}\to C$ be a stable short exact sequence in $\mathcal{C}$. Then
$H(A)\stackrel{H(i)}\to H(B)\stackrel{H(d)}\to H(C)$ is a kernel-cokernel pair in $\widehat{\mathcal{C}}$ by Lemma
\ref{l:presrefl}. We show that $H(d)$ is a semi-stable cokernel, the proof that $H(i)$ is a semi-stable
kernel following in a dual manner. 

Let $\gamma:Z\to H(C)$ be a morphism in $\widehat{\mathcal{C}}$. Then $Z$ is a direct
summand of an object in ${\rm Im}(H)$, say $Z\oplus Z'=H(C')$ for some objects $Z'$ of $\widehat{\mathcal{C}}$ and $C'$
of $\mathcal{C}$. The composition $\left [\begin{smallmatrix} \gamma & 0 \end{smallmatrix}\right ]=\gamma \left
[\begin{smallmatrix} 1 & 0 \end{smallmatrix}\right ]:H(C')=Z\oplus Z'\to H(C)$ must be of the form $H(h)$ for some
morphism $h:C'\to C$ in $\mathcal{C}$. Since $d$ is a semi-stable cokernel, we have the following commutative diagram
in $\mathcal{C}$:
\[\SelectTips{cm}{}
\xymatrix{
A \ar@{=}[d] \ar[r]^{i'} & B' \ar[d]_g \ar[r]^{d'} & C' \ar[d]^h \\ 
A \ar[r]^{i} & B \ar[r]^{d} & C  
}\]
in which the right square is a pullback, and the rows are kernel-cokernel pairs (see Lemma \ref{l:RW}). The
morphism $H(d'):H(B')\to Z\oplus Z'$ is of the form $\left [\begin{smallmatrix} u \\ v \end{smallmatrix}\right ]$ for
some morphisms $u:H(B')\to Z$ and $v:H(B')\to Z'$ in $\widehat{\mathcal{C}}$. Then we have the following commutative
diagram in $\widehat{\mathcal{C}}$:
\[\SelectTips{cm}{}
\xymatrix{
H(A) \ar@{=}[d] \ar[r]^-{H(i')} & H(B') \ar[d]_{H(g)} \ar[r]^-{\left [\begin{smallmatrix} u \\ v \end{smallmatrix}\right
]} & Z\oplus Z' \ar[d]^{\left [\begin{smallmatrix} \gamma & 0 \end{smallmatrix}\right ]} \\ H(A) \ar[r]_-{H(i)} & H(B)
\ar[r]_-{H(d)} & H(C) 
}\] 
in which the right square is a pullback and the rows are kernel-cokernel pairs by Lemmas \ref{l:presrefl} and
\ref{l:RW}. Since $H(d)0=\left [\begin{smallmatrix} \gamma&0 \end{smallmatrix}\right ]\left [\begin{smallmatrix} 0\\1
\end{smallmatrix}\right ]$, by the pullback property there is a unique morphism $\varepsilon:Z'\to H(B')$ such that
$H(g)\varepsilon=0$ and $\left [\begin{smallmatrix} u \\ v \end{smallmatrix}\right ]\varepsilon=\left
[\begin{smallmatrix} 0\\1 \end{smallmatrix}\right ]$. In particular, $v\varepsilon={\rm id}_{Z'}$, and so $v$ is a
retraction. Since $\widehat{\mathcal{C}}$ is weakly idempotent complete, $v$ has a kernel, say $j:K\to H(B')$. Then
$H(d)H(g)j=\gamma uj$. 

We claim that the following commutative square:
\[\SelectTips{cm}{}
\xymatrix{
K\ar[r]^{uj} \ar[d]_{H(g)j} & Z \ar[d]^{\gamma} \\ 
H(B) \ar[r]_{H(d)} & H(C) }
\]
is a pullback of $H(d)$ and $\gamma$ in $\widehat{\mathcal{C}}$. To this end, let $a:D\to H(B)$ and $b:D\to Z$ be
morphisms such that $H(d)a=\gamma b$. Then $H(d)a=\left [\begin{smallmatrix} \gamma&0 \end{smallmatrix}\right ]\left
[\begin{smallmatrix} b\\0 \end{smallmatrix}\right ]$, hence the pullback square of $H(d)$ and $\left
[\begin{smallmatrix} \gamma&0 \end{smallmatrix}\right ]$ yields the existence of a unique morphism $\delta:D\to H(B')$
such that $H(g)\delta=a$ and $\left [\begin{smallmatrix} u \\ v \end{smallmatrix}\right ]\delta=\left
[\begin{smallmatrix} b\\0 \end{smallmatrix}\right ]$. Since $v\delta=0$ and $j=\Ker(v)$, there
is a unique morphism $w:D\to K$ such that $\delta=jw$. Then we have $H(g)jw=a$ and $ujw=b$. For uniqueness, suppose
that there is another morphism $w':D\to K$ such that $H(g)jw'=a$ and $ujw'=b$. It follows that 
$H(g)(jw-jw')=0$ and $\left [\begin{smallmatrix} u \\ v \end{smallmatrix}\right ](jw-jw')=\left [\begin{smallmatrix} 0
\\ 0 \end{smallmatrix}\right ]$, whence $jw-jw'=0$ by the pullback property of $H(d)$ and $\left
[\begin{smallmatrix} \gamma&0 \end{smallmatrix}\right ]$. Then $w=w'$, because $j$ is a monomorphism. Thus $H(d)$
and $\gamma$ have the claimed pullback in $\widehat{\mathcal{C}}$.

Since $H(d)H(g)=\gamma \left [\begin{smallmatrix} 1&0 \end{smallmatrix}\right ]\left [\begin{smallmatrix} u
\\ v \end{smallmatrix}\right ]$, the pullback square of $H(d)$ and $\gamma$ yields the existence of a unique morphism
$r:H(B')\to K$ such that $H(g)jr=H(g)$ and $ujr=\left [\begin{smallmatrix} 1&0 \end{smallmatrix}\right ]\left
[\begin{smallmatrix} u \\ v \end{smallmatrix}\right ]$. Denote $k=rH(i'):H(A)\to K$. Then we have the following
commutative diagram in $\widehat{\mathcal{C}}$:
\[\SelectTips{cm}{}
\xymatrix{
H(A) \ar@{=}[d] \ar[r]^-{H(i')} & H(B') \ar[d]_r \ar[r]^-{\left [\begin{smallmatrix} u \\ v \end{smallmatrix}\right ]} &
Z\oplus Z' \ar[d]^{\left [\begin{smallmatrix} 1 & 0 \end{smallmatrix}\right ]} \\ H(A) \ar[r]^-k & K\ar[r]^{uj}
\ar[d]_{H(g)j} & Z \ar[d]^{\gamma} \\ & H(B) \ar[r]_{H(d)} & H(C)
}\] 
in which the upper row is a kernel-cokernel pair. The vertical rectangle and the lower square are pullbacks, hence so
is the right-upper square by Lemma \ref{l:PB}. Since $uj{\rm id}_K=\left [\begin{smallmatrix} 1 & 0
\end{smallmatrix}\right ]
\left [\begin{smallmatrix} uj \\ 0 \end{smallmatrix}\right ]$, the pullback property of $uj$ and $\left
[\begin{smallmatrix} 1 & 0 \end{smallmatrix}\right ]$ yields the existence of a unique morphism $j':K\to H(B')$ such
that $rj'={\rm id}_K$ and $\left [\begin{smallmatrix} u \\ v \end{smallmatrix}\right ]j'=\left [\begin{smallmatrix} uj
\\ 0 \end{smallmatrix}\right ]$. Then the equality $H(g)jr=H(g)$ implies that $H(g)j=H(g)j'$, and so $H(g)(j-j')=0$. We
also have $\left [\begin{smallmatrix} u \\ v \end{smallmatrix}\right ](j-j')=\left [\begin{smallmatrix} 0 \\ 0
\end{smallmatrix}\right ]$. Now the pullback property of $H(d)$ and $\left [\begin{smallmatrix} \gamma & 0
\end{smallmatrix}\right ]$ implies that $j-j'=0$, and so $rj=rj'={\rm id}_K$. 

The commutative right-upper square in the previous diagram implies that $ujr$ is an epimorphism, hence so is
$uj$. Also, we have $ujk=0$. We claim that $uj={\rm coker}(k)$. To this end, let $t:K\to T$ be a morphism such that
$tk=0$. Then the equality $trH(i')=0$ and the fact that $\left [\begin{smallmatrix} u \\ v \end{smallmatrix}\right
]={\rm coker}(H(i'))$ imply the existence of a unique morphism $\left [\begin{smallmatrix} z & z'
\end{smallmatrix}\right ]:Z\oplus Z'\to T$ such that $\left [\begin{smallmatrix} z & z' \end{smallmatrix}\right ]\left
[\begin{smallmatrix} u \\ v \end{smallmatrix}\right ]=tr$, that is, $zu+z'v=tr$. It follows that $zuj+z'vj=trj$, that
is, $zuj=t$. The uniqueness of the morphism $z:Z\to T$ such that $zuj=t$ follows because $uj$ is an epimorphism. Hence
$uj$ is a cokernel. Consequently, $H(d)$ is a semi-stable cokernel in $\widehat{\mathcal{C}}$.
\end{proof}

\section{Maximal exact structure}

We consider the following concept of exact category given by Quillen \cite{Q}, as simplified by Keller \cite{Keller}.

\begin{definition} \label{d:exact} \rm By an \emph{exact category} we mean an additive category $\mathcal{C}$ endowed
with a distinguished class $\mathcal{E}$ of short exact sequences satisfying the axioms $[E0]$, $[E1]$, $[E2]$ and
$[E2^{\rm op}]$ below. The short exact sequences in $\mathcal{E}$ are called \emph{conflations}, while the kernels and
cokernels appearing in such exact sequences are called \emph{inflations} (denoted by $\rightarrowtail$) and
\emph{deflations} (denoted by $\twoheadrightarrow$) respectively. \vskip2mm

$[E0]$ The identity morphism $1_0:0\to 0$ is a deflation.

$[E1]$ The composition of two deflations is again a deflation.

$[E2]$ The pullback of a deflation along an arbitrary morphism exists and is again a deflation.

$[E2^{\rm op}]$ The pushout of an inflation along an arbitrary morphism exists and is again an inflation.
\end{definition}

We should note that the duals of the axioms $[E0]$ and $[E1]$ on inflations as well as both sides of Quillen's ``obscure
axiom'' hold in any such exact category \cite{Keller}. 

Following \cite[Definition~2.4]{DS}, we introduce the following terminology.

\begin{definition} \label{d:stablePOPB} \rm Let $\mathcal{C}$ be an additive category and let $(\widehat{\mathcal{C}},H)$ be its idempotent
completion. Consider an exact structure $\mathcal{E}$ on $\widehat{\mathcal{C}}$. Then $\mathcal{C}$ is called
\emph{stable for pushouts for $(\widehat{\mathcal{C}},\mathcal{E})$} if for every conflation $H(A)\stackrel{H(i)}\rightarrowtail
H(B)\stackrel{H(d)}\twoheadrightarrow H(C)$ and every pushout diagram 
\[\SelectTips{cm}{}
\xymatrix{
H(A) \ar[d]_{H(f)} \ar@{>->}[r]^-{H(i)} & H(B) \ar[d]^w \\ 
H(A') \ar@{>->}[r]^-{u} & S  
}\]
in $\widehat{\mathcal{C}}$, there exists an object $B'$ of $\mathcal{C}$ such that $S\cong H(B')$.

Dually, one defines $\mathcal{C}$ to be \emph{stable for pullbacks for $(\widehat{\mathcal{C}},\mathcal{E})$}. 
\end{definition}

The following proposition extends \cite[Theorem~2.6]{DS}. 

\begin{proposition} \label{p:char} Let $\mathcal{C}$ be an additive category and let $(\widehat{\mathcal{C}},H)$ be its
idempotent completion. Consider an exact structure $\mathcal{E}$ on $\widehat{\mathcal{C}}$, and the distinguished
class $\mathcal{E}_0$ of kernel-cokernel pairs $A\rightarrowtail B\twoheadrightarrow C$ in $\mathcal{C}$ such that
the induced kernel-cokernel pair $H(A)\rightarrowtail H(B)\twoheadrightarrow H(C)$ is a conflation in
$\widehat{\mathcal{C}}$. Then $\mathcal{E}_0$ defines an exact structure on $\mathcal{C}$ if and only if $\mathcal{C}$
is stable under pushouts and pullbacks for $(\widehat{\mathcal{C}},\mathcal{E})$.  
\end{proposition}

\begin{proof} Suppose first that $\mathcal{E}_0$ defines an exact structure on $\mathcal{C}$. We only show that
$\mathcal{C}$ is stable under pushouts for $(\widehat{\mathcal{C}},\mathcal{E})$, the part for pullbacks being dual. 
Let $H(A)\stackrel{H(i)}\rightarrowtail H(B)\stackrel{H(d)}\twoheadrightarrow H(C)$ be a conflation and 
$H(f):H(A)\to H(A')$ a morphism in $\widehat{\mathcal{C}}$, and consider a pushout of $H(i)$ and $H(f)$. Then we have an 
induced commutative diagram in $\widehat{\mathcal{C}}$:
\[\SelectTips{cm}{}
\xymatrix{
H(A) \ar[d]_{H(f)} \ar@{>->}[r]^-{H(i)} & H(B) \ar[d]^w \ar@{->>}[r]^-{H(d)} & H(C) \ar@{=}[d] \\ 
H(A') \ar@{>->}[r]^-u & S \ar@{->>}[r]^-v & H(C) 
}\] in which the lower row is also a conflation. Since $A\stackrel{i}\rightarrowtail B\stackrel{d}\twoheadrightarrow C$
is a kernel-cokernel pair in $\mathcal{C}$ by Lemma \ref{l:presrefl}, it follows that it belongs to $\mathcal{E}_0$,
hence it is a conflation in $\mathcal{C}$. Then there exists the pushout of $i$ and $f$ in $\mathcal{C}$ and we have a
commutative diagram in $\mathcal{C}$:
\[\SelectTips{cm}{}
\xymatrix{
A \ar[d]_{f} \ar@{>->}[r]^-{i} & B \ar[d]^g \ar@{->>}[r]^-{d} & C \ar@{=}[d] \\ 
A' \ar@{>->}[r]^-{i'} & B' \ar@{->>}[r]^-{d'} & C 
}\] in which the lower row is also a conflation. It follows that 
\[\SelectTips{cm}{}
\xymatrix{
H(A) \ar[d]_{H(f)} \ar@{>->}[r]^-{H(i)} & H(B) \ar[d]^{H(g)} \\ 
H(A') \ar@{>->}[r]^-{H(i')} & H(B')  
}\] is a commutative square in $\widehat{\mathcal{C}}$. By the universal property of the pushout of $H(i)$ and $H(f)$
there exists a unique morphism $\lambda:S\to H(B')$ in $\widehat{\mathcal{C}}$ such that $\lambda u=H(i')$ and $\lambda
w=H(g)$. Then $H(d')\lambda u=0=vu$ and $H(d')\lambda w=H(d)=vw$, and again the universal property of the pushout of
$H(i)$ and $H(f)$ implies that $H(d')\lambda=v$. Hence the following diagram in $\widehat{\mathcal{C}}$ is commutative:
\[\SelectTips{cm}{}
\xymatrix{
H(A') \ar@{=}[d] \ar@{>->}[r]^-u & S \ar[d]^{\lambda} \ar@{->>}[r]^-v & H(C) \ar@{=}[d] \\ 
H(A') \ar@{>->}[r]^-{H(i')} & H(B') \ar@{->>}[r]^-{H(d')} & H(C) 
}\] Since $A'\stackrel{i'}\rightarrowtail B'\stackrel{d'}\twoheadrightarrow C$ is a conflation in $\mathcal{C}$, the
lower row of the above diagram is a conflation in $\mathcal{C}$ by the choice of $\mathcal{E}_0$. Now the Short Five
Lemma in $\widehat{\mathcal{C}}$ \cite[Corollary~3.2]{Buhler} implies that $\lambda$ is an isomorphism, and so $S\cong
H(B')$. This shows that $\mathcal{C}$ is stable under pushouts for $(\widehat{\mathcal{C}},\mathcal{E})$.
 
Conversely, suppose that $\mathcal{C}$ is stable under pushouts and pullbacks for $(\widehat{\mathcal{C}},\mathcal{E})$.
It is clear that the axiom $[E0]$ holds for $\mathcal{E}_0$. In order to show $[E2]$, let $A\stackrel{i}\rightarrowtail
B\stackrel{d}\twoheadrightarrow C$ be a kernel-cokernel pair in $\mathcal{E}_0$, and let $f:A\to A'$ be a morphism in
$\mathcal{C}$. Then $H(A)\stackrel{H(i)}\rightarrowtail H(B)\stackrel{H(d)}\twoheadrightarrow H(C)$ is a conflation in
$\widehat{\mathcal{C}}$, and we may consider the pushout of $H(i)$ and $H(f)$ in $\widehat{\mathcal{C}}$. Since
$\mathcal{C}$ is stable under pushouts for $(\widehat{\mathcal{C}},\mathcal{E})$, we may choose a pushout diagram of
the following form:
\[\SelectTips{cm}{}
\xymatrix{
H(A) \ar[d]_{H(f)} \ar@{>->}[r]^-{H(i)} & H(B) \ar[d]^{H(g)} \ar[r]^-{H(d)} & H(C) \ar@{=}[d] \\ 
H(A') \ar@{>->}[r]^-{H(i')} & H(B') \ar[r]^-{H(d')} & H(C) 
}\] in which the lower row is a conflation in $\widehat{\mathcal{C}}$. Using Lemma \ref{l:presrefl} we obtain a pushout
of $i$ and $f$ in $\mathcal{C}$ reflected from $\widehat{\mathcal{C}}$ in which the lower row is a kernel-cokernel pair
in $\mathcal{E}_0$, as required. Dually, $[E2^{\rm op}]$ holds for $\mathcal{E}_0$. 

Finally we need to prove $[E1]$. To this end, let $A\stackrel{f}\rightarrowtail B\stackrel{g}\twoheadrightarrow C$ and
$A'\stackrel{f'}\rightarrowtail C\stackrel{g'}\twoheadrightarrow C'$ be kernel-cokernel pairs in $\mathcal{E}_0$. Then
$H(A)\stackrel{H(f)}\rightarrowtail H(B)\stackrel{H(g)}\twoheadrightarrow H(C)$ and
$H(A')\stackrel{H(f')}\rightarrowtail H(C)\stackrel{H(g')}\twoheadrightarrow H(C')$ are conflations in
$\widehat{\mathcal{C}}$ by the choice of $\mathcal{E}_0$. By the axiom $[E1]$ for the exact structure $\mathcal{E}$, it
follows that $H(g'g)=H(g')H(g)$ is a deflation in $\widehat{\mathcal{C}}$. Since $\mathcal{C}$ is stable under pullbacks
for $(\widehat{\mathcal{C}},\mathcal{E})$, we may choose a pullback of $H(g)$ and $H(f')$ in $\widehat{\mathcal{C}}$ of
the form:
\[\SelectTips{cm}{}
\xymatrix{
H(K) \ar[d]_{H(k)} \ar@{->>}[r]^-{H(\alpha)} & H(A') \ar@{>->}[d]^{H(f')} \\ 
H(B) \ar@{->>}[r]^-{H(g)} & H(C)  
}\] Since $H(f')$ is a monomorphism in $\widehat{\mathcal{C}}$, so is $H(k)$. 
We claim that $H(k):H(K)\to H(B)$ is the kernel of $H(g'g)$ in $\widehat{\mathcal{C}}$. To this end, let $t:T\to H(B)$
be a morphism in $\widehat{\mathcal{C}}$ such that $H(g'g)t=0$. Since $H(f')$ is the kernel of $H(g')$, there exists a
unique morphism $\mu:T\to H(A')$ such that $H(f')\mu=H(g)t$. Now by the universal property of the pullback of $H(g)$ and
$H(f')$ there exists a unique morphism $\lambda:T\to H(K)$ such that $H(k)\lambda=t$ and $H(\alpha)\lambda=\mu$. If
there is also a morphism $\lambda':T\to H(k)$ in $\widehat{\mathcal{C}}$ such that $H(k)\lambda'=t$ and
$H(\alpha)\lambda'=\mu$, then it follows that $\lambda'=\lambda$, because $H(k)$ is a monomorphism. Hence $H(k)$ is the
kernel of the deflation $H(g'g)$ in $\widehat{\mathcal{C}}$, and so $H(k)$ is an inflation in $\widehat{\mathcal{C}}$.
Then by the choice of $\mathcal{E}_0$ it follows that the kernel-cokernel pair $K\stackrel{k}\rightarrowtail
B\stackrel{g'g}\twoheadrightarrow C'$ belongs to $\mathcal{E}_0$. This shows that $[E1]$ holds for $\mathcal{E}_0$, and
so $\mathcal{E}_0$ defines an exact structure on $\mathcal{C}$. 
\end{proof}

Now we are in a position to give our main result.

\begin{theorem} \label{t:maxex} Let $\mathcal{C}$ be an additive category, and let $(\widehat{\mathcal{C}},H)$ be its idempotent
completion. Consider the maximal exact structure $\mathcal{E}_{\rm max}$ in $\widehat{\mathcal{C}}$ given by the stable short 
exact sequences in $\widehat{\mathcal{C}}$. Then the stable short exact sequences define an exact structure on $\mathcal{C}$ 
if and only if $\mathcal{C}$ is closed under pushouts and pullbacks for $(\widehat{\mathcal{C}},\mathcal{E}_{\rm max})$. 
In this case, it is the maximal exact structure on $\mathcal{C}$.
\end{theorem}

\begin{proof} Stable short exact sequences define the maximal exact structure $\mathcal{E}_{\rm max}$ in $\widehat{\mathcal{C}}$ 
by \cite[Theorem~3.5]{C12}.

If the stable short exact sequences define an exact structure on $\mathcal{C}$, then 
$\mathcal{C}$ is closed under pushouts and pullbacks for $(\widehat{\mathcal{C}},\mathcal{E}_{\rm max})$ by Proposition \ref{p:char}.

Conversely, assume that $\mathcal{C}$ is closed under pushouts and pullbacks for $(\widehat{\mathcal{C}},\mathcal{E}_{\rm max})$. 
By \cite[Theorem~3.5]{C12}, the stable exact sequences define an exact structure on the (weakly)
idempotent complete category $\widehat{\mathcal{C}}$. Consider the distinguished class $\mathcal{E}_0$ of
kernel-cokernel pairs $A\rightarrowtail B\twoheadrightarrow C$ in $\mathcal{C}$ such that the induced kernel-cokernel
pair $H(A)\rightarrowtail H(B)\twoheadrightarrow H(C)$ is a stable exact sequence in $\widehat{\mathcal{C}}$.
Then $\mathcal{E}_0$ defines an exact structure on $\mathcal{C}$ by Proposition \ref{p:char}, and its conflations must be stable
short exact sequences. On the other hand, if $A\to B\to C$ is a stable short exact sequence in $\mathcal{C}$, then the
short exact sequence $H(A)\to H(B)\to H(C)$ is stable in $\widehat{\mathcal{C}}$ by Proposition \ref{p:main}, hence
$A\to B\to C$ is a conflation in $\mathcal{C}$ with respect to $\mathcal{E}_0$. Therefore, $\mathcal{E}_0$ coincides
with the class of stable short exact sequences. 

Clearly, this is the maximal exact structure on $\mathcal{C}$. 
\end{proof}

The stability condition from Theorem \ref{t:maxex} is usually easier to check in specific categories. In what follows we give situations 
when it does or does not hold. 

\begin{corollary} The stable short exact sequences define the maximal exact structure on the category of free modules.  
\end{corollary}

\begin{proof} It is well known that the idempotent completion of the category $\mathcal{F}$ of free modules 
is the category $\mathcal{P}$ of projective modules. Consider the usual functor $H:\mathcal{F}\to \mathcal{P}$ and the maximal exact 
structure $\mathcal{E}_{\rm max}$ on $\mathcal{P}$ given by the stable short exact sequences. 
If one starts with a stable short exact sequence in $\mathcal{P}$ of the form 
$H(A)\stackrel{H(i)}\rightarrowtail H(B)\stackrel{H(d)}\twoheadrightarrow H(C)$, 
there exists the pullback of $H(i)$ and any morphism $H(f):H(A)\to H(A')$. Then one has an induced stable short exact sequence $H(A')\to P\to H(C)$ 
\cite[Proposition~2.12]{Buhler}, which splits by the projectivity of $H(C)$. Since $\mathcal{P}$ is weakly idempotent complete, 
it follows that $P\cong H(A')\oplus H(C)$. Hence $P$ is free, and so the stability condition for pushouts 
in the sense of Definition \ref{d:stablePOPB} is checked. Dually, one has the stability condition for pullbacks. 
Hence the stable short exact sequences define the maximal exact structure on $\mathcal{F}$ by Theorem \ref{t:maxex}.
\end{proof}

\begin{example} \label{e:SR} \rm Consider the ring $R=k[x]/(x^2)$ for some field $k$, and denote $S=k_R$. Let $\mathcal{C}$ be 
the additive category $\langle S,R\oplus R\rangle$ generated by $S$ and $R\oplus R$. Then its idempotent completion $\widehat{\mathcal{C}}$ is 
the additive category $\langle S,R\rangle$ generated by $S$ and $R$, and then $\widehat{\mathcal{C}}={\rm Mod}(R)$. 
Then we have the following commutative diagram in $\widehat{\mathcal{C}}$:
\[\SelectTips{cm}{}
\xymatrix{
S\oplus S \ar[d]_{(0,1)} \ar@{>->}[r]^-{(i,i)} & R\oplus R \ar[d]^{(p,1)} \ar[r] & S\oplus S \ar@{=}[d] \\ 
0\oplus S \ar@{>->}[r]^-{(0,i)} & S\oplus R \ar[r] & S\oplus S 
}\]
in which $i$ is the inclusion, $p$ is the projection map with $pi=0$, and the remaining morphisms are the canonical ones. 
Then the left square is a pushout, and the upper kernel-cokernel pair is stable, since $\widehat{\mathcal{C}}={\rm Mod}(R)$. 
Also, the first and the last objects of the lower kernel-cokernel pair are objects in $\mathcal{C}$. But the middle object $S\oplus R$ 
does not belong to $\mathcal{C}$. This shows that the stability condition for pushouts in the sense of Definition \ref{d:stablePOPB} does not hold, 
hence the stable short exact sequences do not define an exact structure on $\mathcal{C}$ by Theorem \ref{t:maxex}.
\end{example}

We end with two applications to categories of chain complexes and projective spectra. 

\begin{corollary} Let $\mathcal{C}$ be an additive category, and let $(\widehat{\mathcal{C}},H)$ be its idempotent
completion. Consider the maximal exact structure $\mathcal{E}_{\rm max}$ in $\widehat{\mathcal{C}}$ given by the stable short 
exact sequences in $\widehat{\mathcal{C}}$, and assume that $\mathcal{C}$ is closed under pushouts and pullbacks for 
$(\widehat{\mathcal{C}},\mathcal{E}_{\rm max})$. Then the maximal exact structure on the category $\mathbf{Ch}(\mathcal{C})$ of
chain complexes over $\mathcal{C}$ consists of the short exact sequences which are stable short exact sequences in $\mathcal{C}$ in each degree.
\end{corollary}

\begin{proof} If the additive category $\mathcal{C}$ has an exact structure $\mathcal{E}$, then
the additive category $\mathbf{Ch}(\mathcal{C})$ also has an exact structure $\mathbf{Ch}(\mathcal{E})$ whose
conflations are the short exact sequences which are conflations from $\mathcal{E}$ in each degree (e.g., see
\cite[Lemma~9.1]{Buhler}). In particular, if $\mathcal{E}_{\rm max}^{\mathcal{C}}$ is the maximal exact structure on
$\mathcal{C}$, then $\mathbf{Ch}(\mathcal{E}_{\rm max}^{\mathcal{C}})$ is an exact
structure on $\mathbf{Ch}(\mathcal{C})$, included in the maximal exact structure $\mathcal{E}_{\rm
max}^{\mathbf{Ch}(\mathcal{C})}$ on $\mathbf{Ch}(\mathcal{C})$. 

Now let $A\overset{i}\rightarrowtail B\overset{d}\twoheadrightarrow C$ be a conflation from $\mathcal{E}_{\rm
max}^{\mathbf{Ch}(\mathcal{C})}$, that is, a stable short exact sequence in $\mathbf{Ch}(\mathcal{C})$ by Theorem
\ref{t:maxex}. Set a degree $n$, let $X$ be an object of $\mathcal{C}$ and let $\alpha:X\to C^n$ be a morphism in
$\mathcal{C}$. Then $X$ can be viewed as a chain complex $C'$ concentrated in degree $n$, and we may define a chain map
$h:C'\to C$ by $h^n=\alpha$ and $h^m=0$ for every $m\neq n$. By Lemma \ref{l:RW}, the pullback of $d$ and $h$ yields
the following commutative diagram in $\mathbf{Ch}(\mathcal{C})$:
\[\SelectTips{cm}{}
\xymatrix{
A \ar@{=}[d] \ar@{>->}[r]^{i'} & B' \ar[d]_g \ar@{->>}[r]^{d'} & C' \ar[d]^h \\ 
A \ar@{>->}[r]^i & B \ar@{->>}[r]^d & C  
}\] Then $A\overset{i'}\rightarrowtail B'\overset{d'}\twoheadrightarrow C'$ is a conflation from $\mathcal{E}_{\rm
max}^{\mathbf{Ch}(\mathcal{C})}$ by Theorem \ref{t:maxex}. Then $A^n\overset{i'^n}\rightarrowtail
B'^n\overset{d'^n}\twoheadrightarrow C'^n$ is a kernel-cokernel pair in $\mathcal{C}$, and so
$A^n\overset{i^n}\rightarrowtail B^n\overset{d^n}\twoheadrightarrow C^n$ is a stable short exact sequence in
$\mathcal{C}$. Thus $A\overset{i}\rightarrowtail B\overset{d}\twoheadrightarrow C$ is a conflation from
$\mathbf{Ch}(\mathcal{E}_{\rm max}^{\mathcal{C}})$, which shows that $\mathbf{Ch}(\mathcal{E}_{\rm
max}^{\mathcal{C}})=\mathcal{E}_{\rm max}^{\mathbf{Ch}(\mathcal{C})}$.
\end{proof}

Following \cite[Definition~7.1]{FS} a \emph{projective spectrum} $X=(X_n,X_m^n)$ with values in a category
$\mathcal{C}$ consists of a sequence $(X_n)_{n\in \mathbb{N}}$ of objects of $\mathcal{C}$ and morphisms $X_m^n:X_m\to
X_n$ in $\mathcal{C}$ defined for $n\leq m$ such that $X_n^n=1_{X_n}$ for every $n\in \mathbb{N}$, and 
$X_n^k\circ X_m^n=X_m^k$ for $k\leq n\leq m$. A \emph{morphism of projective spectra} $f:X\to Y$ between two projective
spectra $X=(X_n,X_m^n)$ and $Y=(Y_n,Y_m^n)$ consists of a sequence $(f_n)_{n\in \mathbb{N}}$ of morphisms $f_n:X_n\to
Y_n$ in $\mathcal{C}$ such that $f_n\circ X_m^n=Y_m^n\circ f_m$ for $n\leq m$. If $\mathcal{C}$ is additive, then so is
the category $\mathbf{P}(\mathcal{C})$ of projective spectra with values in $\mathcal{C}$.

\begin{corollary} Let $\mathcal{C}$ be an additive category, and let $(\widehat{\mathcal{C}},H)$ be its idempotent
completion. Consider the maximal exact structure $\mathcal{E}_{\rm max}$ in $\widehat{\mathcal{C}}$ given by the stable short 
exact sequences in $\widehat{\mathcal{C}}$, and assume that $\mathcal{C}$ is closed under pushouts and pullbacks for 
$(\widehat{\mathcal{C}},\mathcal{E}_{\rm max})$. Then the maximal exact structure on the category $\mathbf{P}(\mathcal{C})$ 
consists of the short exact sequences $A\overset{i}\to B\overset{d}\to C$ for which 
$A_n\overset{i_n}\rightarrowtail B_n\overset{d_n}\twoheadrightarrow C_n$ are stable short exact sequences in
$\mathcal{C}$ for every $n\in \mathbb{N}$.
\end{corollary}

\begin{proof} It is similar to the proof of \cite[Corollary~7.4]{FS}, using Theorem \ref{t:maxex}.
\end{proof}

\end{document}